\documentclass[11pt]{article}

\usepackage[T1]{fontenc}
\usepackage{lmodern}
\usepackage{amsmath,amssymb,amsthm}
\usepackage[margin=1in]{geometry}
\usepackage{cite}
\usepackage[hidelinks]{hyperref}

\hypersetup{
  pdftitle={The Marcus--Minc Transform Inequality},
  pdfauthor={Yair Lavi}
}

\newtheorem{theorem}{Theorem}[section]
\newtheorem{proposition}[theorem]{Proposition}
\newtheorem{lemma}[theorem]{Lemma}

\title{The Marcus--Minc Transform Inequality}
\author{Yair Lavi}
\date{24 September 2026}

\begin{document}
\maketitle

\begin{abstract}
We prove the Marcus--Minc conjecture: if $n\ge2$ is an integer,
$A$ is a nonnegative $n\times n$ matrix whose row and column sums equal one,
and $J_n$ has every entry $1/n$, then
\[
\operatorname{per}A\ge
\operatorname{per}\!\left(\frac{nJ_n-A}{n-1}\right).
\]
We also determine all equality cases.
\end{abstract}

\noindent\textbf{Keywords.} Permanent; doubly stochastic matrix; matrix inequality; Marcus-Minc.

\noindent\textbf{2020 Mathematics Subject Classification.} 15A15, 05A20, 15B51.

\section{Introduction and main results}

For an integer $n\ge2$, put $[n]=\{1,\ldots,n\}$, and let $S_n$ be the set
of all permutations of $[n]$. For an $n\times n$ matrix $D=(d_{ij})$, its
permanent is
\[
\operatorname{per}D=\sum_{\pi\in S_n}\prod_{i=1}^n d_{i,\pi(i)}.
\tag{1.1}
\]
A real matrix is \textbf{doubly stochastic} if its entries are nonnegative and the
entries in each row and in each column sum to one. We denote the set of
$n\times n$ doubly stochastic matrices by $\Omega_n$. Let $\mathbf1$ be the
all-ones column vector, let
\[
J_n=\frac1n\mathbf1\mathbf1^{\mathsf T},
\]
and define the Marcus--Minc transform $\tau_n:\Omega_n\to\Omega_n$ by
\[
\tau_n(A)=\frac{nJ_n-A}{n-1}.
\tag{1.2}
\]
The map is well defined: every entry of a doubly stochastic matrix is at most
one, so $nJ_n-A$ is nonnegative, and each of its row and column sums is
$n-1$. A \textbf{permutation matrix} is a zero--one matrix having exactly one $1$
in each row and column.

In 1967 Marcus and Minc \cite{ref1} conjectured that
$\operatorname{per}A\ge\operatorname{per}\tau_n(A)$ for every $A\in\Omega_n$,
and that for $n\ge3$ equality holds only at $J_n$; we call this the
Marcus--Minc conjecture. As reported in \cite{ref7,ref9}, they proved the inequality
for $n=2$, for symmetric positive semidefinite $A$, and for $A$ sufficiently
close to $J_n$. Wang proved the inequality in order three and showed that
the original equality claim fails \cite{ref2}: equality also holds for the six matrices
$(\mathbf1\mathbf1^{\mathsf T}-P)/2$, with $P$ a permutation matrix.
Foregger proved it in order four \cite{ref3}. Hwang proved it for partly decomposable
matrices \cite{ref4}, and Malek proved it near $J_n$ and under a spectral-sector
condition \cite{ref5,ref6}. Cheon and Wanless recorded the general assertion as
Conjecture 3 in their 2005 update of Minc's catalogue \cite{ref6}. The problem also
appears in later work \cite{ref7,ref8,ref9}.

Our first result proves the Marcus--Minc conjecture and classifies equality in
every dimension.

\begin{theorem}
For every integer $n\ge2$ and every $A\in\Omega_n$,
\[
\operatorname{per}A\ge\operatorname{per}\tau_n(A).
\tag{1.3}
\]
The equality cases are exactly the following:

\begin{enumerate}
\item if $n=2$, every $A\in\Omega_2$;
\item if $n=3$, the matrix $J_3$ and the six matrices
$(\mathbf1\mathbf1^{\mathsf T}-P)/2$, with $P$ a permutation matrix;
\item if $n\ge4$, only $J_n$.
\end{enumerate}
\end{theorem}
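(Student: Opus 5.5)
We propose to reduce the inequality to a comparison of coefficients after centering $A$ at $J_n$. Write $A=J_n+X$, where $X\mathbf1=X^{\mathsf T}\mathbf1=0$. Then
\[
\tau_n(A)=\frac{nJ_n-J_n-X}{n-1}=J_n-\frac{X}{n-1}.
\]
So both sides are values of the single polynomial $f(t)=\operatorname{per}(J_n+tX)$, namely $f(1)$ and $f(-1/(n-1))$.

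\textbf{Expansion.} We expand
\[
f(t)=\sum_{k=0}^n c_k t^k,\qquad c_k=\sum_{|I|=|K|=k}\operatorname{per}X[I,K]\,\operatorname{per}J_n(I^c,K^c).
\]
Every $(n-k)\times(n-k)$ submatrix of $J_n$ has permanent $(n-k)!/n^{n-k}$. Hence $c_k=\frac{(n-k)!}{n^{n-k}}\sigma_k(X)$, where $\sigma_k$ denotes the sum of all $k\times k$ subpermanents. Because the row and column sums of $X$ vanish, $\sigma_1(X)=0$. The difference to be shown nonnegative is therefore
\[
f(1)-f\!\left(-\tfrac1{n-1}\right)=\sum_{k\ge2}c_k\Bigl(1-\bigl(-\tfrac1{n-1}\bigr)^k\Bigr).
\]
The weights $w_k=1-(-1/(n-1))^k$ are positive. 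For $n\ge3$ they are strictly positive, and for $n=2$ we have $w_2=0$, which gives case (1) immediately since then $f$ has degree $2$.

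\textbf{Main obstacle.} The $c_k$ need not be nonnegative individually, so a termwise argument fails. The plan is to use the geometry of $\Omega_n$ instead. Choose a segment parameter: $g(s)=\operatorname{per}(J_n+sX)$ for $s\in[-1/(n-1),1]$. Both endpoints lie in $\Omega_n$, because $J_n+sX=(1-s)J_n+sA$ for $s\ge0$, and for $s<0$ it is a convex combination of $J_n$ and $\tau_n(A)$.

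The claim becomes $g(1)\ge g(-1/(n-1))$. We first try to show that $g$ is nondecreasing on $[0,1]$, by van der Waerden-type monotonicity along segments from $J_n$ (a Friedland/Egorychev–type statement, using Alexandrov–Fenchel for mixed discriminants). We then compare the negative side by a reflection argument: pair $s$ with $-s/(n-1)$ and show $g(s)-g(-s/(n-1))$ is increasing in $s$. Its derivative is $\sum_{k\ge2}k c_k s^{k-1}(1-(-1/(n-1))^k)$.

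For the hardest step, I would first try to write this derivative as a positive combination of terms $\operatorname{per}$ of doubly stochastic matrices minus $\operatorname{per}J$-type quantities. Concretely, I would use the row-by-row Laplace expansion and the identity $\partial_s\operatorname{per}(J+sX)=\sum_{i,j}x_{ij}\operatorname{per}(J+sX)(i|j)$. I would then apply the Alexandrov–Fenchel inequality $\operatorname{per}(a,b,\dots)^2\ge\operatorname{per}(a,a,\dots)\operatorname{per}(b,b,\dots)$ to the column pair formed by a column of $A$ and the corresponding column of $\tau_n(A)$. This uses the fact that they are related by the involution-like relation $a+(n-1)\tau=n\cdot\frac1n\mathbf1$.

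\textbf{Equality.} Equality forces equality in every Alexandrov–Fenchel step. This means proportionality of the relevant columns, which we would analyze combinatorially. For $n\ge4$ it should force $X=0$, giving case (3). For $n=3$ we would instead solve directly: $f$ is cubic, and the condition $c_2w_2+c_3w_3=0$ reduces to an explicit equation in $\sigma_2,\sigma_3$. Identifying its solutions in $\Omega_3$ as $J_3$ and the six matrices $(\mathbf1\mathbf1^{\mathsf T}-P)/2$ is a finite check, which gives case (2).
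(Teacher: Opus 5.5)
\textbf{There is a genuine gap: for $n\ge3$ the inequality is never proved, and the key step is false at $n=3$.}

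Your setup is correct and is exactly the paper's (4.1): centering at $J_n$, the expansion $f(t)=\sum_k\frac{(n-k)!}{n^{n-k}}\sigma_k(X)t^k$ with $\sigma_1(X)=0$, and the case $n=2$. Beyond that, everything rests on the reflection claim that $h(s)=g(s)-g(-s/(n-1))$ is nondecreasing on $[0,1]$. Since $\tau_n((1-s)J_n+sA)=J_n-\frac{s}{n-1}X$, $h(s)$ is precisely the Marcus--Minc gap of $(1-s)J_n+sA$. So your claim is strictly stronger than the theorem. You give no argument for it when $n\ge4$, and it fails in order three.

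Take $A=(\mathbf1\mathbf1^{\mathsf T}-I_3)/2$, so $X$ has $-1/3$ on the diagonal and $1/6$ off it. Then $c_2=\frac13\sigma_2(X)=\frac16\|X\|_F^2=\frac1{12}$ and $c_3=\operatorname{per}X=-\frac1{18}$, so
\[
h(s)=\frac34c_2s^2+\frac98c_3s^3=\frac{s^2(1-s)}{16}.
\]
This decreases on $(2/3,1]$; for instance $h(1/2)=1/128>0=h(1)$. Your preliminary claim that $g$ is nondecreasing on $[0,1]$ is also not a known consequence of the Egorychev--Falikman/Alexandrov--Fenchel theory. In any case it plays no role in comparing the two endpoints.

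\textbf{The Alexandrov--Fenchel step has no mechanism.} The inequality $\operatorname{per}(a,b,\ldots)^2\ge\operatorname{per}(a,a,\ldots)\operatorname{per}(b,b,\ldots)$ is a one-sided bound on mixed permanents whose remaining columns are nonnegative. By contrast, $h'(s)=\sum_{i,j}x_{ij}\bigl(\operatorname{per}(J_n+sX)(i|j)+\frac1{n-1}\operatorname{per}(J_n-\frac{s}{n-1}X)(i|j)\bigr)$ carries signed weights $x_{ij}$, and you never say how one controls the other.

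Your equality sketch exposes the same problem. The columns $a_j$ and $(\mathbf1-a_j)/(n-1)$ both sum to $1$, so proportionality forces $a_j=\mathbf1/n$, hence $A=J_n$. Nothing in the argument uses $n\ge4$, so it would also exclude Wang's six equality matrices in order three. Nor is the order-three case a finite check: the equality set is the zero set of a cubic on a polytope. The paper handles both the inequality and the equality cases there with $G_3=\frac18(r^2+6\sum b_{ij}^3)$ and a row-wise Schur identity.

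\textbf{The missing idea} is to stop comparing the endpoints on one polynomial and bound them separately, at different scales.
\begin{itemize}
\item At $t=-1/(n-1)$ the signed coefficients are harmless. The paper proves $|\sigma_k(X)|\le\sigma_k(Q)\|X\|_F^2/(n-1)$ for $k\ge2$ by a tensor-isotropy argument, and summing gives $\operatorname{per}\tau_n(A)\le d_n(1+\beta_nr^2)$.
\item At $t=1$ the same absolute bound is useless, since it sums to $\operatorname{per}(J_n+Q)=\operatorname{per}I_n=1$. Instead, $\operatorname{per}A$ is bounded below by Gurvits's capacity inequality plus a one-column estimate, giving $\operatorname{per}A\ge d_ne^{a_nr^2}$.
\end{itemize}
The separation $\beta_n<a_n$ for $n\ge4$ then yields both the inequality and equality only at $J_n$.
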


We also give new proofs for the known cases $n=3,4$; $n=2$ is immediate.

The proof comes from two estimates with different dimensional scales. To state
them, define the Frobenius norm of $X=(x_{ij})$ by
\[
\|X\|_F=\left(\sum_{i,j}x_{ij}^2\right)^{1/2},
\]
and set
\[
d_n=\frac{n!}{n^n},\qquad
r^2=\|A-J_n\|_F^2,\qquad
a_n=\frac1{2n}+\frac1{3n^2}.
\tag{1.4}
\]
For real $t$, also put
\[
T_n(t)=\sum_{k=0}^n\frac{(nt)^k(1-t)^{n-k}}{k!},
\qquad
\beta_n=\frac{T_n(1/(n-1))-1}{n-1}.
\tag{1.5}
\]
\begin{theorem}
For every $n\ge2$ and every $A\in\Omega_n$,
\[
\frac{\operatorname{per}A}{d_n}\ge e^{a_nr^2},
\qquad
\frac{\operatorname{per}\tau_n(A)}{d_n}\le1+\beta_nr^2.
\tag{1.6}
\]
\end{theorem}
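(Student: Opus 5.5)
We will prove the two estimates in (1.6) separately. Both start from the decomposition $A=J_n+X$, where $X\mathbf1=X^{\mathsf T}\mathbf1=0$ and $\|X\|_F^2=r^2$. Then $\tau_n(A)=J_n-X/(n-1)$. This reduces the second estimate to the first family of estimates evaluated along the line $J_n+sX$ at the single value $s=-1/(n-1)$.

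\textbf{Upper bound for $\operatorname{per}\tau_n(A)$.} Expand $\operatorname{per}(J_n+sX)$ by multilinearity:
\[
\operatorname{per}(J_n+sX)=\sum_{k=0}^n s^k n^{-(n-k)}(n-k)!\,\sigma_k(X),
\]
where $\sigma_k(X)$ is the sum of the permanents of all $k\times k$ submatrices of $X$. Because $X$ has zero line sums, $\sigma_1=0$ and $\sigma_2=\tfrac12\bigl((\sum x_{ij})^2-\sum x_{ij}^2-\dots\bigr)=\tfrac12 r^2$. The constraint $-1/n\le x_{ij}\le 1-1/n$ holds, and here $s<0$. We then bound $|\sigma_k(X)|$ in terms of $r^2$. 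The natural tool is that $\sigma_k$ of a matrix is bounded via $\operatorname{per}$ of matrices with bounded entries, and the extremal configuration should be the one producing $T_n$. Indeed $T_n(t)$ is exactly the expansion of $\operatorname{per}\bigl((1-t)J_n+tP\bigr)/d_n$ for a permutation matrix $P$, since the $k$-th term counts permutations agreeing with $P$ in $k$ places, weighted approximately. So the plan is to show that the function $\phi(s)=\operatorname{per}(J_n+sX)/d_n-1$, as a polynomial in $s$ with vanishing linear term, satisfies $\phi(s)\le \beta_n r^2$ at $s=-1/(n-1)$. We will do this by a convexity or interpolation argument: write $\tau_n(A)$ as a convex combination of matrices of the form $(1-t)J_n+tQ$, where the $Q$ are doubly stochastic, weighted so that the quadratic form $r^2$ is matched. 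Alternatively we apply a Birkhoff decomposition of $A$ and bound each cross term, normalizing by $r^2/(n-1)=\|P-J_n\|_F^2/(n-1)$ for $t=1/(n-1)$. The key claim is that, among matrices with given $r^2$, the ratio $(\operatorname{per}\tau_n(A)/d_n-1)/r^2$ is maximized at the extreme $A=P$. At that point it equals exactly $\beta_n$, since $\|P-J_n\|_F^2=n-1$.

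\textbf{Lower bound for $\operatorname{per}A$.} Here we will use a stability form of van der Waerden. First, a second-order expansion gives $\operatorname{per}(J_n+X)/d_n\approx 1+\tfrac{1}{2n}r^2\cdot(\text{const})$, which matches the $\tfrac1{2n}$ in $a_n$. For the global bound we will run the Gurvits capacity/induction argument: use the column-by-column reduction $\operatorname{per}A\ge \prod_j g(\lambda_j)\cdot\operatorname{cap}$, and track the deficiency in the AM–GM step. Each column $c$ of $A$ contributes a factor $\exp\bigl(\tfrac{1}{2n}\|c-\mathbf1/n\|^2+\tfrac{1}{3n^2}\cdots\bigr)$ from a quantitative AM–GM inequality. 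That inequality is $\prod_i (n y_i)^{1/n}\cdots$, or more precisely $\log$ of the mean of a vector exceeds the mean of its logs by at least a quadratic term in its variance, with the cubic correction $1/(3n^2)$. Summing over the columns gives $e^{a_n r^2}$.

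\textbf{Main obstacle.} The upper bound is the hard step. Proving that the extremal ratio occurs at permutation matrices requires controlling all the higher $\sigma_k(X)$ simultaneously, using only $r^2$ and the entry bounds. I would first try to prove the coefficientwise estimate $|\sigma_k(X)|\le c_k\,r^2$, where $c_k$ is attained when $X=P-J_n$. This would use the fact that each $\sigma_k$ restricted to the zero-line-sum subspace is a form whose maximum over the convex set $\Omega_n-J_n$, divided by $r^2$, is attained at vertices.
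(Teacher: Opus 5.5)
\textbf{Upper bound.} Your key claim here is false. Since $\tau_n(P)=\frac{n}{n-1}J_n-\frac1{n-1}P=(1-t)J_n+tP$ with $t=-\frac1{n-1}$, the value at a permutation matrix is $\operatorname{per}\tau_n(P)/d_n=T_n(-\frac1{n-1})$, not $T_n(\frac1{n-1})$. For $n=4$ this is $\frac{32}{27}$, against $T_4(\frac13)=\frac{112}{81}$. So your ratio equals $\frac5{81}$ at $P$, not $\beta_4=\frac{31}{243}$.

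Nor is $P$ the maximizer. Along $A_\epsilon=(1-\epsilon)J_4+\epsilon P$ the ratio is $\frac2{27}-\frac{8\epsilon}{243}+\frac{5\epsilon^2}{243}$, which is largest as $\epsilon\to0$. In general the ratio tends to $\frac{n}{2(n-1)^3}$ as $A\to J_n$. So no principle places the worst case at vertices, and convex-combination or Birkhoff arguments have nothing to transfer. The constant $\beta_n$ arises only after every term $s^k\sigma_k(X)$ is bounded in absolute value; that is why $T_n$ is evaluated at $+\frac1{n-1}$.

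Your fallback, $|\sigma_k(X)|\le\frac{\sigma_k(Q)}{n-1}\|X\|_F^2$ with $Q=I_n-J_n$, is exactly the right target. It is the paper's Lemma 3.1, and summed in absolute value it gives precisely $1+\beta_nr^2$. But you give no proof of it. Homogeneity of degree $k-2$ only pushes the maximum of $|\sigma_k|/r^2$ to the boundary of $\Omega_n-J_n$, not to its vertices.

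The missing idea is a tensor argument:
\begin{itemize}
\item write $\sigma_k(X)=\langle w,X^{\otimes k}w\rangle$ with $w=Q^{\otimes k}u_k$;
\item invariance of $w$ under simultaneous coordinate permutations forces every one-factor marginal of $w$ to be $\frac{\|w\|^2}{n-1}I_H$;
\item an SVD $X|_H=UDV^{\mathsf T}$, Cauchy--Schwarz, and $0\preceq D^{\otimes k}\preceq\frac1k\sum_\ell I_H^{\otimes(\ell-1)}\otimes D^2\otimes I_H^{\otimes(k-\ell)}$ then finish.
\end{itemize}
That last step needs all singular values at most one, i.e.\ $\|A-J_n\|_{2\to2}\le1$ (from Jensen, as in (3.9)). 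The entrywise bounds you list are not what is used.

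\textbf{Lower bound.} Gurvits' induction cannot be run with a deficit collected at every column. Only at the first differentiation do you have an explicit polynomial whose capacity can be bounded from the entries of $A$. There, $q_j=\partial_{x_j}p_A|_{x_j=0}$, and two weighted AM--GM inequalities give $\operatorname{Cap}(q_j)\ge\prod_i(1-a_{ij})^{1-a_{ij}}$, improving the generic factor $(\frac{n-1}{n})^{n-1}$. After that step the polynomial is no longer a product of linear forms. You only know a lower bound for its capacity, and there is no column-wise AM--GM deficit left to track.

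What works is the one-column bound $\operatorname{per}A\ge d_{n-1}\prod_i(1-a_{ij})^{1-a_{ij}}$, extended to matrices with zero entries by a limiting argument. Then expand $(1-x)\log(1-x)$ and apply Jensen's inequality $\sum_ia_{ij}^3\ge(\sum_ia_{ij}^2)^2$ to the cubic term. This gives $\log(\operatorname{per}A/d_n)\ge(\frac12+\frac1{3n})F_j$ for a single column $j$. The factor $\frac1n$ in $a_n$ comes from choosing a column with $F_j\ge r^2/n$ (equivalently, averaging the $n$ one-column bounds), not from multiplying gains over all columns. Your per-column coefficients happen to match this averaging, but the argument you sketch does not produce them.
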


For $n\ge5$ one has $\beta_n<1/(2n)<a_n$. In order four,
\[
\frac18<\beta_4=\frac{31}{243}<a_4=\frac7{48},
\qquad
a_4-\beta_4=\frac{71}{3888}.
\tag{1.7}
\]
Thus order four genuinely uses the $1/(3n^2)$ term in $a_n$; the simpler
coefficient $1/(2n)$ does not separate the two bounds there.

The bounds also give a global quantitative form of Theorem 1.1. Define
\[
G_n(A)=\operatorname{per}A-\operatorname{per}\tau_n(A)
\]
and, for $n\ge5$,
\[
\epsilon_n=\frac1{2n}-
\frac{n}{(n-1)(2(n-2)^2-n)},
\qquad
\epsilon_4=\frac{71}{3888}.
\tag{1.8}
\]
We will prove that every $\epsilon_n$ in (1.8) is positive. Consequently,
\[
G_n(A)\ge d_n\epsilon_n\|A-J_n\|_F^2
\qquad(n\ge4).
\tag{1.9}
\]
The constants in (1.9) are explicit rather than asserted to be optimal. The
sharper coefficient supplied directly by (1.6) is $a_n-\beta_n$.

Section 2 proves the lower estimate in (1.6). Sections 3 and 4 prove the
transformed upper estimate. Section 5 separates the two bounds for every
$n\ge4$. Section 6 treats orders three and two directly and determines all
equality cases.

\section{The global lower bound}

\subsection{A coefficient inequality}

A polynomial $p\in\mathbb R[x_1,\ldots,x_m]$ is \textbf{real stable} if either
$p$ is the zero polynomial or
\[
p(z_1,\ldots,z_m)\ne0
\quad\text{whenever}\quad
\operatorname{Im}z_1,\ldots,\operatorname{Im}z_m>0.
\]
It is \textbf{homogeneous of degree $m$} if every monomial with nonzero coefficient
has total degree $m$. For a homogeneous polynomial of degree $m$ in $m$
variables with nonnegative coefficients, define its capacity by
\[
\operatorname{Cap}(p)=
\inf_{x_1,\ldots,x_m>0}
\frac{p(x_1,\ldots,x_m)}{x_1\cdots x_m}.
\tag{2.1}
\]
The notation $[x_1\cdots x_m]p$ means the coefficient of the monomial
$x_1\cdots x_m$ in $p$. We use the following coefficient bound of Gurvits
\cite[Theorem~2.4 and Corollary~2.5]{ref10}. A proof is included because the
zero-capacity boundary case requires care and because the bound is the only
stable-polynomial input to our argument.

\begin{proposition}[Gurvits]
If $p$ is real stable, homogeneous of degree
$m$ in $m$ variables, and has nonnegative coefficients, then
\[
[x_1\cdots x_m]p\ge\frac{m!}{m^m}\operatorname{Cap}(p).
\tag{2.2}
\]
\end{proposition}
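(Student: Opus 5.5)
The plan is Gurvits' inductive argument: differentiate at zero one variable at a time, and control how the capacity changes at each step. For $p$ as in the statement and $1\le k\le m$, set
\[
p_k(x_1,\ldots,x_k)=\frac{\partial^{m-k}p}{\partial x_{k+1}\cdots\partial x_m}\Big|_{x_{k+1}=\cdots=x_m=0}.
\]
Then $p_m=p$, and $p_0=[x_1\cdots x_m]p$ is the target coefficient.

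Each $p_k$ is homogeneous of degree $k$ in $k$ variables and has nonnegative coefficients. It is also real stable, because stability is preserved under both operations used to build it:
\begin{itemize}
\item \emph{Partial differentiation.} For a fixed choice of the other variables in the upper half-plane, $p$ is a univariate polynomial in $x_j$ with all roots in the closed lower half-plane. Gauss--Lucas then places the roots of $\partial_j p$ in the same half-plane.
\item \emph{Setting a variable to the real value $0$.} This is a limit of substituting $i\varepsilon$ with $\varepsilon\downarrow0$. Hurwitz's theorem shows the limit is stable or identically zero, and the zero polynomial counts as stable by the definition above.
\end{itemize}

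The key step is the one-variable estimate
\[
\operatorname{Cap}(p_{k-1})\ge\Bigl(\frac{k-1}{k}\Bigr)^{k-1}\operatorname{Cap}(p_k),\qquad k\ge2,
\]
together with $p_0=p_1(1)\ge \operatorname{Cap}(p_1)$. The latter is trivial, since $p_1=c\,x_1$ gives $p_1(1)=c=\operatorname{Cap}(p_1)$. Multiplying these estimates gives $p_0\ge\prod_{k=2}^m\bigl(\frac{k-1}{k}\bigr)^{k-1}\operatorname{Cap}(p)$. The product telescopes to $m!/m^m$: the numerators contribute $\prod_{k=1}^{m-1}k^{k}$ and the denominators $\prod_{k=2}^m k^{k-1}$, and their ratio is $(m-1)!/m^{m-1}=m!/m^m$.

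To prove the one-variable estimate, I fix $x_1,\ldots,x_{k-1}>0$ and look at $q(t)=p_k(x_1,\ldots,x_{k-1},t)$. This is a real polynomial in $t$ of degree at most $k$ with nonnegative coefficients, and $q'(0)=p_{k-1}(x_1,\ldots,x_{k-1})$. Real stability of $p_k$ implies that $q$ is real-rooted: restrict to the line $x_i=x_i+s\,y$ with positive directions and use the standard characterization of stability. Since $q$ also has nonnegative coefficients, its roots are nonpositive. By the definition of capacity, $q(t)\ge \operatorname{Cap}(p_k)\,t\,x_1\cdots x_{k-1}$ for all $t>0$. The needed univariate lemma is the following:
\begin{quote}
If $q$ is real-rooted of degree at most $k$, has nonnegative coefficients, and satisfies $q(t)\ge c\,t$ for all $t>0$, then $q'(0)\ge c\bigl(\frac{k-1}{k}\bigr)^{k-1}$.
\end{quote}
Applying the lemma and dividing by $x_1\cdots x_{k-1}$ gives the estimate, after taking the infimum over $x$.

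For the lemma, I would write $q(t)=a\prod_i(t+r_i)$ with $r_i\ge0$. If some $r_i=0$, then $q(t)=t\,h(t)$ and $q'(0)=h(0)$. This is the degenerate case, and the zero-capacity issue arises here: if $c=0$ there is nothing to prove. Otherwise $h(t)\ge c$ for all $t>0$, so $h(0)\ge c$ by continuity, which is even better than needed. If all $r_i>0$, then $q'(0)/q(0)=\sum_i 1/r_i$. I would apply AM--GM to $\log q$, or equivalently use $q(t)\le q(0)\exp\bigl(t\,q'(0)/q(0)\bigr)$. The sharper bound $q(t)\le q(0)\bigl(1+\frac{t\,q'(0)}{k\,q(0)}\bigr)^k$ holds by concavity of $\log$ applied to the product over at most $k$ factors, padding with factors equal to $1$ when the degree is below $k$. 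Minimizing $q(t)/t$ over $t$ against this bound yields exactly $c\le q'(0)\,\frac{k^{k-1}}{(k-1)^{k-1}}$. This optimization is routine: the minimum of $(1+\lambda t/k)^k/t$ occurs at $t=k/(\lambda(k-1))$.

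I expect the main obstacle to be the boundary bookkeeping rather than the inequality itself. This means handling cases where $p_k$ becomes identically zero or has zero capacity, where $q$ has degree less than $k$, and where stability must be justified after setting variables to zero. When the capacity is zero the claim is trivial, and the chain argument only needs the inequality in the direction "coefficient $\ge$ constant $\times$ capacity". Each step's zero case is therefore harmless: if $\operatorname{Cap}(p_k)=0$ then all later bounds are vacuous, and $p_0\ge0$ holds anyway.
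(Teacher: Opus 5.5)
Your proposal is correct and follows essentially the same route as the paper: differentiate one variable at a time and set it to zero, apply the univariate AM--GM bound $q(t)\le q(0)\bigl(1+\tfrac{\lambda t}{k}\bigr)^k$ with $\lambda=q'(0)/q(0)$ evaluated at $t=k/(\lambda(k-1))$, treat $q(0)=0$ separately, and telescope $\prod_{k=2}^m((k-1)/k)^{k-1}=m!/m^m$. The paper is more careful in two places: before using Gauss--Lucas it shows the restriction in $x_j$ is nonconstant at every point of the upper half-plane (by applying Hurwitz to the leading coefficient in $x_j$), and it gets real-rootedness of $t\mapsto p_k(x_1,\ldots,x_{k-1},t)$ by specializing the other variables at positive reals, whereas your ``positive directions'' argument would use the direction $e_k$, which is not strictly positive.
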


\begin{proof}
If $p=0$ or $m=1$, the claim is immediate; hence assume $p\ne0$
and $m\ge2$. We first
recall two closure facts used below. Specializing one variable of a real-stable
polynomial at a real value preserves real stability or gives zero: approach
that value from the upper half-plane and apply Hurwitz's theorem. Partial
differentiation has the same property. Indeed, if $d=\deg_{x_m}p\ge1$, the
leading coefficient $p_d(x_1,\ldots,x_{m-1})$ is nonvanishing when all its
arguments lie in the upper half-plane: the nonvanishing polynomials
$p(z_1,\ldots,z_{m-1},it)/(it)^d$ converge locally uniformly to $p_d$ as
$t\to\infty$, and Hurwitz's theorem applies. Thus
$p(z_1,\ldots,z_{m-1},\cdot)$ is nonconstant there. Gauss--Lucas places all
roots of its derivative in the closed lower half-plane, so the partial
derivative is real stable. If $d=0$, that derivative is zero.

For integers $s\ge2$, put
$g(s)=((s-1)/s)^{s-1}$, and set $g(1)=1$.

Let $h$ be a one-variable polynomial of degree at most $s$, with nonnegative
coefficients and all roots real and nonpositive. We first show
\[
h'(0)\ge g(s)\inf_{t>0}\frac{h(t)}t.
\tag{2.3}
\]
If $h(0)>0$, write
$h(t)=h(0)\prod_{i=1}^s(1+c_it)$ with $c_i\ge0$, adding zero values of $c_i$
when the degree is smaller than $s$. Set
$u=\sum_i c_i=h'(0)/h(0)$. The arithmetic--geometric mean inequality gives
\[
h(t)\le h(0)\left(1+\frac{ut}{s}\right)^s.
\]
If $u>0$ and $s\ge2$, evaluate the right-hand side divided by $t$ at
$t=s/[u(s-1)]$ to obtain (2.3). If $u=0$, both sides of (2.3) are zero; if
$s=1$, (2.3) is the direct identity for a linear polynomial. If $h(0)=0$,
nonnegativity of the coefficients gives
$\inf_{t>0}h(t)/t=h'(0)$, including the case in which $t^2$ divides $h$.

Now fix positive values of $x_1,\ldots,x_{m-1}$ and apply (2.3) to
$h(t)=p(x_1,\ldots,x_{m-1},t)$. Real stability, followed by specialization at
positive real values, makes every root of this one-variable polynomial real;
nonnegative coefficients make every root nonpositive. If
\[
q(x_1,\ldots,x_{m-1})=
\left.\frac{\partial p}{\partial x_m}\right|_{x_m=0},
\]
then the definition of capacity and (2.3) give
\[
q(x_1,\ldots,x_{m-1})
\ge g(m)\operatorname{Cap}(p)x_1\cdots x_{m-1}.
\]
Hence $\operatorname{Cap}(q)\ge g(m)\operatorname{Cap}(p)$. The closure facts
above allow the same step to be iterated. At each stage the symbol $s$ in
$g(s)$ is the number of
variables still present. After the last differentiation the resulting
constant is $[x_1\cdots x_m]p$, and
\[
\prod_{s=2}^m g(s)=\frac{m!}{m^m}.
\]
This proves (2.2), including zero capacity.
\end{proof}

\subsection{A one-column bound}

Fix $A=(a_{ij})\in\Omega_n$ whose entries are initially all positive, and
fix a column index $j$. Write
\[
c_i=a_{ij},
\qquad
L_i(x)=\sum_{k\ne j}a_{ik}x_k.
\]
Consider
\[
p_A(x)=\prod_{i=1}^n\left(\sum_{k=1}^n a_{ik}x_k\right).
\]
Every linear factor is nonzero when all variables have positive imaginary
parts, so $p_A$ is real stable. Differentiation and specialization give the
nonzero real-stable polynomial
\[
q_j(x)=\left.\frac{\partial p_A}{\partial x_j}\right|_{x_j=0}
=\sum_{i=1}^n c_i\prod_{s\ne i}L_s(x).
\tag{2.4}
\]
It is homogeneous of degree $n-1$ in the $n-1$ variables indexed by
$k\ne j$, and the coefficient of $\prod_{k\ne j}x_k$ is
$\operatorname{per}A$.

Since $\sum_i c_i=1$, weighted arithmetic--geometric mean applied to (2.4)
gives
\[
q_j(x)\ge
\prod_{i=1}^n\left(\prod_{s\ne i}L_s(x)\right)^{c_i}
=\prod_{s=1}^n L_s(x)^{1-c_s}.
\tag{2.5}
\]
For each row $s$, the remaining row sum is
$\sum_{k\ne j}a_{sk}=1-c_s$. A second weighted arithmetic--geometric mean
inequality gives
\[
L_s(x)^{1-c_s}
\ge(1-c_s)^{1-c_s}\prod_{k\ne j}x_k^{a_{sk}}.
\tag{2.6}
\]
Multiplying (2.6) over $s$ and using the remaining column sums shows
\[
\operatorname{Cap}(q_j)\ge\prod_{i=1}^n(1-c_i)^{1-c_i}.
\]
Proposition 2.1, with $m=n-1$, therefore yields
\[
\operatorname{per}A\ge
d_{n-1}\prod_{i=1}^n(1-a_{ij})^{1-a_{ij}}.
\tag{2.7}
\]
The full one-column bound (2.7) is implicit in the proof of
Laurent and Schrijver \cite[proof of Corollary~1d, equation~(39)]{ref11}; our use of it is the
explicit quadratic estimate (2.11). For an arbitrary
$A\in\Omega_n$, apply (2.7) to $(1-\delta)A+\delta J_n$ and let
$\delta\downarrow0$. We use the continuous convention $0^0=1$. This proves
(2.7) on the boundary without assuming continuity or attainment of capacity.

\subsection{From one column to the global lower bound}

Define $f:[0,1]\to\mathbb R$ by
\[
f(x)=(1-x)\log(1-x),
\]
with its continuous value $f(1)=0$, and define the squared deviation of
column $j$ from the uniform column by
\[
F_j=\sum_{i=1}^n\left(a_{ij}-\frac1n\right)^2.
\]
Because $d_{n-1}(1-1/n)^{n-1}=d_n$, taking logarithms in (2.7) gives
\[
\log\frac{\operatorname{per}A}{d_n}
\ge\sum_{i=1}^n f(a_{ij})-nf(1/n).
\tag{2.8}
\]
The convergent endpoint-valid expansion
\[
f(x)=-x+\sum_{k\ge2}\frac{x^k}{k(k-1)}
\tag{2.9}
\]
makes the gain over the uniform column explicit. Put
$s_2=\sum_i a_{ij}^2=1/n+F_j$. Jensen's inequality with weights $a_{ij}$
gives
\[
\sum_i a_{ij}^3\ge s_2^2,
\qquad
\sum_i a_{ij}^3-\frac1{n^2}
\ge\frac{2F_j}{n}+F_j^2.
\tag{2.10}
\]
For every $k\ge4$, ordinary convexity gives
$\sum_i a_{ij}^k\ge n^{1-k}$. Substituting (2.9) into the difference in (2.8),
the linear terms cancel, the quadratic term contributes $F_j/2$, and (2.10)
controls the cubic term. Consequently
\[
\sum_i f(a_{ij})-nf(1/n)
\ge\left(\frac12+\frac1{3n}\right)F_j+\frac{F_j^2}{6}
\ge\left(\frac12+\frac1{3n}\right)F_j.
\tag{2.11}
\]
Since $\sum_jF_j=r^2$, some column has $F_j\ge r^2/n$. Equations
(2.8)--(2.11) prove the first inequality in (1.6):
\[
\frac{\operatorname{per}A}{d_n}\ge
\exp\!\left[\left(\frac1{2n}+\frac1{3n^2}\right)r^2\right].
\tag{2.12}
\]
\section{A subpermanent estimate for zero-margin matrices}

For subsets $S,T\subseteq[n]$ of the same size, let $X[S,T]$ be the
submatrix of $X$ with row set $S$ and column set $T$. For $0\le k\le n$,
define the sum of all $k\times k$ subpermanents by
\[
\sigma_k(X)=
\sum_{\substack{S,T\subseteq[n]\\ |S|=|T|=k}}
\operatorname{per}X[S,T],
\qquad \sigma_0(X)=1.
\tag{3.1}
\]
A matrix has \textbf{zero margins} if every row sum and every column sum is zero.
Its Euclidean operator norm is
\[
\|X\|_{2\to2}=\sup_{v\ne0}\frac{\|Xv\|_2}{\|v\|_2}.
\]
Write $I_n$ for the identity matrix and set
\[
Q=I_n-J_n.
\]
Put
$H=\mathbf1^\perp=\{v\in\mathbb R^n:\mathbf1^{\mathsf T}v=0\}$. The map
$Q$ is the identity on $H$ and is zero on the line
$\mathbb R\mathbf1$; this is what we mean by the orthogonal projection onto
$H$.

\begin{lemma}
If $X$ is a real $n\times n$ matrix with zero margins and
$\|X\|_{2\to2}\le1$, then, for
$2\le k\le n$,
\[
|\sigma_k(X)|\le
\frac{\sigma_k(Q)}{n-1}\|X\|_F^2.
\tag{3.2}
\]
\end{lemma}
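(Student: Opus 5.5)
The plan is to expand $\sigma_k$ by inclusion--exclusion over coincidences of indices. The zero-margin hypothesis then kills most terms. Each surviving term is a graph contraction of copies of $X$, which we bound by $\|X\|_F^2$ times powers of $\|X\|_{2\to2}\le1$. Finally we compare with $Q$, for which every step should be tight.

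\textbf{Step 1: expansion.} Write
\[
\sigma_k(X)=\frac1{k!}\sum x_{i_1j_1}\cdots x_{i_kj_k},
\]
the sum over $k$-tuples of pairwise distinct rows $(i_1,\dots,i_k)$ and pairwise distinct columns $(j_1,\dots,j_k)$. Remove each distinctness constraint by M\"obius inversion on the partition lattice $\Pi_k$. This gives
\[
\sigma_k(X)=\frac1{k!}\sum_{\pi,\rho\in\Pi_k}\mu(\pi)\mu(\rho)\,C_{\pi,\rho}(X),
\]
with $\mu(\pi)=\prod_{B\in\pi}(-1)^{|B|-1}(|B|-1)!$. Here $C_{\pi,\rho}(X)$ is the unrestricted sum in which all row indices in a block of $\pi$ are set equal, and likewise for columns and $\rho$.

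\textbf{Step 2: singletons vanish.} If $\pi$ has a singleton block $\{l\}$, summing over $i_l$ first produces a column sum of $X$, which is $0$. So $C_{\pi,\rho}(X)=0$, and the same holds for singletons of $\rho$. Only pairs $(\pi,\rho)$ with all blocks of size at least $2$ survive.

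\textbf{Step 3: bound each surviving contraction.} View $C_{\pi,\rho}(X)$ as a bipartite multigraph $\Gamma$. Its row-vertices are the blocks of $\pi$, its column-vertices are the blocks of $\rho$, and it has $k$ edges, each carrying a copy of $X$; every vertex has degree at least $2$. The key claim is
\[
|C_{\pi,\rho}(X)|\le \|X\|_F^{2c(\Gamma)}\|X\|_{2\to2}^{\,k-2c(\Gamma)}\le\|X\|_F^2\,(n-1)^{c(\Gamma)-1},
\]
where $c(\Gamma)$ is the number of connected components. The second inequality uses $\|X\|_F^2\le \operatorname{rank}(X)\|X\|_{2\to2}^2\le n-1$, since $X=QXQ$ has rank at most $n-1$.

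For a single component, I would first peel off a cycle. For a cycle of length $2m$ the contraction is $\operatorname{tr}((XX^{\mathsf T})^m)$, which is at most $\|X\|_F^2\|X\|_{2\to2}^{2m-2}$. The remaining ears and trees of degree-$\ge2$ vertices are then absorbed by Cauchy--Schwarz and operator-norm steps: sum out one vertex at a time, treating the adjacent edges as a bilinear form in $X$. Entrywise (Hadamard) products arising at high-degree vertices are handled by $\|X\circ Y\|_{2\to2}\le\|X\|_{2\to2}\|Y\|_{2\to2}$.

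\textbf{Step 4: compare with $Q$.} For $Q$ we have $\|Q\|_{2\to2}=1$ and $\|Q\|_F^2=n-1$, so the right side of (3.2) at $X=Q$ is exactly $\sigma_k(Q)$. The natural target is to show that
\[
\sigma_k(Q)=\frac{n-1}{k!}\sum_{\pi,\rho}|\mu(\pi)\mu(\rho)|\,\bigl(\text{bound of Step 3 at }\|X\|_F^2=1,\ \|X\|_{2\to2}=1\bigr).
\]
I would try to prove this by checking that, for $Q$, every surviving contraction $C_{\pi,\rho}(Q)$ attains the Step 3 bound and has sign $\operatorname{sgn}(\mu(\pi)\mu(\rho))$. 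Then the triangle inequality in Step 1 reproduces $\sigma_k(Q)/(n-1)$ exactly.

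\textbf{Main obstacle.} This sign-and-tightness matching for $Q$ is where I expect the difficulty. The contractions of $Q=I-J_n$ are explicit polynomials in $n$, but their signs need not align with $\mu(\pi)\mu(\rho)$. If the matching fails, I would retreat to a more structural route. Since $\sigma_k(X)=\frac{1}{((n-k)!)^2}\,\sigma_n$-type coefficients of $\operatorname{per}(X+z\mathbf1\mathbf1^{\mathsf T})$, one can view $\sigma_k$ as a symmetric $k$-linear form restricted to $H$, since $X=QXQ$. One would then show that among zero-margin $X$ with $\|X\|_{2\to2}\le1$ and fixed $\|X\|_F$, the ratio $|\sigma_k(X)|/\|X\|_F^2$ is maximised at the contraction $X=Q$. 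This would be done by an extreme-point argument on the unit operator-norm ball of $H$, whose extreme points are orthogonal maps of $H$, together with convexity in $X$ along segments.
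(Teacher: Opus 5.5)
Your proposal has a genuine gap in Step 4: the termwise route cannot reach the constant in (3.2), and this already happens for $k=3$. For $k=3$ the only partition of $\{1,2,3\}$ without singleton blocks is $\{\{1,2,3\}\}$, with $\mu(\pi)\mu(\rho)=4$. Your expansion therefore gives $\sigma_3(X)=\frac23\sum_{i,j}x_{ij}^3$, and Step 3 yields only $|\sigma_3(X)|\le\frac23\|X\|_F^2$. Since $\sum_{i,j}Q_{ij}^3=(n-1)(n-2)/n$, inequality (3.2) asks for $|\sigma_3(X)|\le\frac23\cdot\frac{n-2}{n}\|X\|_F^2$. The one surviving contraction for $Q$ falls short of your Step 3 bound $n-1$, so your tightness target is false.

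The signs fail too. For $k=4$, $\pi=\{\{1,2,3,4\}\}$ and $\rho=\{\{1,2\},\{3,4\}\}$ give $\mu(\pi)\mu(\rho)=-6$, whereas $C_{\pi,\rho}(Q)=\sum_i\bigl(\sum_jQ_{ij}^2\bigr)^2=(n-1)^2/n>0$.

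The structural reason is that you use zero margins only to delete singleton blocks, and afterwards you estimate through $\|X\|_F$ and $\|X\|_{2\to2}$ alone. Such estimates cannot give (3.2): $X=\operatorname{diag}(1,\ldots,1,0)$ has the same two norms as $Q$, yet $\sum_{i,j}x_{ij}^3=n-1$. The sharp constant relies on cancellation among the M\"obius terms and on the zero-margin structure beyond Step 2, and the triangle inequality discards both.

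Your fallback does not close the gap. Nothing in it gives $|\sigma_k(X)|/\|X\|_F^2$ the convexity needed to push its maximum to orthogonal maps of $H$. Even for orthogonal $X$ on $H$, the needed inequality $|\sigma_k(X)|\le\sigma_k(Q)$ is a nontrivial case of the lemma, and you leave it unproved.

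What is missing is a global estimate that never splits $\sigma_k$ into terms. The paper writes $\sigma_k(X)=\langle w,X^{\otimes k}w\rangle$ with $w=Q^{\otimes k}u_k$, where $u_k=\sum_{|S|=k}v_S$ is a sum of normalized symmetric tensors. This uses $X=QXQ$ and gives $\|w\|^2=\sigma_k(Q)$. Because $w$ is invariant under simultaneous coordinate permutations, every one-factor marginal of $w$ equals $\frac{\|w\|^2}{n-1}I_H$; this isotropy is where the zero margins really act. Take a singular value decomposition $X|_H=UDV^{\mathsf T}$ and put $a=(U^{\mathsf T})^{\otimes k}w$ and $b=(V^{\mathsf T})^{\otimes k}w$, so that $\sigma_k(X)=\langle a,D^{\otimes k}b\rangle$. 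Since $0\le s_i\le1$ and $k\ge2$, AM--GM on the diagonal entries gives $0\preceq D^{\otimes k}\preceq\frac1k\sum_\ell I_H^{\otimes(\ell-1)}\otimes D^2\otimes I_H^{\otimes(k-\ell)}$. Isotropy evaluates the right side, on $a$ and on $b$, as $\frac{\|w\|^2}{n-1}\operatorname{tr}D^2=\frac{\sigma_k(Q)}{n-1}\|X\|_F^2$. Cauchy--Schwarz for $D^{\otimes k}$ then gives (3.2). The constant is sharp by construction, with equality at $X=Q$, and no sign or tightness bookkeeping over partitions is needed.
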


\begin{proof}
We use tensor powers of $\mathbb R^n$ with the product inner
product
\[
\langle x_1\otimes\cdots\otimes x_k,
y_1\otimes\cdots\otimes y_k\rangle
=\prod_{\ell=1}^k\langle x_\ell,y_\ell\rangle,
\]
extended bilinearly. If $S=\{i_1,\ldots,i_k\}$ is a $k$-element subset of
$[n]$, put
\[
v_S=\frac1{\sqrt{k!}}\sum_{\pi\in S_k}
e_{i_{\pi(1)}}\otimes\cdots\otimes e_{i_{\pi(k)}},
\qquad
u_k=\sum_{|S|=k}v_S,
\tag{3.3}
\]
where $e_1,\ldots,e_n$ are the standard basis vectors. The vectors $v_S$
are orthonormal, and expanding the inner product gives
\[
\langle v_S,X^{\otimes k}v_T\rangle=\operatorname{per}X[S,T].
\tag{3.4}
\]
Zero margins imply $X=QXQ$. Hence, with
$w=Q^{\otimes k}u_k\in H^{\otimes k}$,
\[
\sigma_k(X)=\langle w,X^{\otimes k}w\rangle,
\qquad
\|w\|^2=\sigma_k(Q)\ge0.
\tag{3.5}
\]
The second identity follows because $Q^{\otimes k}$ is an orthogonal
projection. It also supplies the nonnegativity of every $\sigma_k(Q)$ used
below.

We now define the tensor property responsible for the estimate. A linear
operator $R$ on $H$ is \textbf{self-adjoint} if
$\langle Rx,y\rangle=\langle x,Ry\rangle$ for all $x,y\in H$. For
$w\in H^{\otimes k}$ and $1\le\ell\le k$, its $\ell$-th \textbf{one-factor
marginal} is the unique self-adjoint operator $\rho_\ell$ on $H$ satisfying
\[
\operatorname{tr}(\rho_\ell R)=
\left\langle w,
\bigl(I_H^{\otimes(\ell-1)}\otimes R\otimes
I_H^{\otimes(k-\ell)}\bigr)w\right\rangle
\tag{3.6}
\]
for every self-adjoint operator $R$ on $H$; here $\operatorname{tr}$ denotes
the sum of the diagonal entries of an operator in an orthonormal basis. We
call $w$ \textbf{one-factor isotropic} when each $\rho_\ell$ is a scalar multiple
of $I_H$.

To see existence, choose an orthonormal basis $(h_a)$ of $H$ and, after putting
the $\ell$-th factor first, write $w=\sum_a h_a\otimes z_a$, where
$z_a\in H^{\otimes(k-1)}$. The matrix
$(\langle z_a,z_b\rangle)_{a,b}$ is self-adjoint and satisfies (3.6), so it
defines $\rho_\ell$. If two self-adjoint operators satisfy (3.6), their
difference $D$ gives $\operatorname{tr}(D^2)=0$ upon taking $R=D$, and therefore
$D=0$.

Simultaneously permuting the $n$ coordinate labels fixes $u_k$, commutes with
$Q$, and therefore fixes $w$. Thus each $\rho_\ell$ commutes with every
coordinate permutation on $H$. Extend $\rho_\ell$ by zero on
$\mathbb R\mathbf1$. A matrix commuting with every permutation has one
constant on its diagonal and another off its diagonal. Its restriction to
$H$ is therefore scalar. Since
$\operatorname{tr}\rho_\ell=\|w\|^2$, we obtain
\[
\rho_\ell=\frac{\|w\|^2}{n-1}I_H
\qquad(1\le\ell\le k).
\tag{3.7}
\]
This is the asserted isotropy; it is a property of $w$, not an assumption on
$X$.

Choose linear operators $U,V$ on $H$ satisfying
$U^{\mathsf T}U=V^{\mathsf T}V=I_H$ and numbers
$0\le s_1,\ldots,s_{n-1}\le1$ such that
\[
X|_H=UDV^{\mathsf T},
\qquad
D=\operatorname{diag}(s_1,\ldots,s_{n-1}).
\]
Such a factorization is a singular-value decomposition; the displayed
identities say that $U$ and $V$ are orthogonal, and the upper bounds on the
$s_i$ follow from $\|X\|_{2\to2}\le1$. Put
\[
a=(U^{\mathsf T})^{\otimes k}w,
\qquad
b=(V^{\mathsf T})^{\otimes k}w,
\qquad
K=D^{\otimes k}.
\]
The one-factor marginals of both $a$ and $b$ still equal (3.7). A self-adjoint
operator $R$ is \textbf{positive semidefinite} if
$\langle z,Rz\rangle\ge0$ for every $z$. For self-adjoint operators $R,S$,
write $R\preceq S$ when $S-R$ is positive semidefinite. For every diagonal
entry of $K$, the assumptions $k\ge2$ and $0\le s_i\le1$ give
\[
\prod_{\ell=1}^k s_{i_\ell}
\le\left(\prod_{\ell=1}^k s_{i_\ell}^2\right)^{1/k}
\le\frac1k\sum_{\ell=1}^k s_{i_\ell}^2.
\]
Equivalently,
\[
0\preceq K\preceq\frac1k\sum_{\ell=1}^k
I_H^{\otimes(\ell-1)}\otimes D^2\otimes
I_H^{\otimes(k-\ell)}.
\tag{3.8}
\]
Equations (3.6)--(3.8) yield
\[
\langle a,Ka\rangle,\ \langle b,Kb\rangle
\le\frac{\|w\|^2}{n-1}\operatorname{tr}D^2.
\]
Cauchy--Schwarz for the positive semidefinite operator $K$ now gives
\[
\begin{aligned}
|\sigma_k(X)|
&=|\langle a,Kb\rangle|\\
&\le\sqrt{\langle a,Ka\rangle\langle b,Kb\rangle}\\
&\le\frac{\sigma_k(Q)}{n-1}\|X\|_F^2,
\end{aligned}
\]
because $\operatorname{tr}D^2=\|X\|_F^2$. This proves (3.2).
\end{proof}

For later use, we verify the contraction hypothesis. If $A\in\Omega_n$ and
$x\in\mathbb R^n$, Jensen's inequality in each row and the column sums give
\[
\sum_i\left(\sum_j a_{ij}x_j\right)^2
\le\sum_{i,j}a_{ij}x_j^2
=\sum_jx_j^2.
\tag{3.9}
\]
Thus $\|A\|_{2\to2}\le1$. With $B=A-J_n$, the row and column sum identities
also give $B=QAQ$, so $B$ has zero margins and $\|B\|_{2\to2}\le1$.
Lemma 3.1 applies to $B$.

\section{The transformed upper bound}

Let $B=A-J_n$. Expanding the permanent according to the entries chosen from
$tB$ gives, for every real $t$,
\[
\operatorname{per}(J_n+tB)
=\sum_{k=0}^n\frac{(n-k)!}{n^{n-k}}t^k\sigma_k(B).
\tag{4.1}
\]
Indeed, after choosing the $k$ rows and $k$ columns used by $B$, the remaining
$n-k$ rows can be matched to the remaining columns in $(n-k)!$ ways, and each
remaining entry of $J_n$ contributes $1/n$. Since $B$ has zero margins,
$\sigma_1(B)=0$.

Put $q=1/(n-1)$. Formula (1.2) becomes
$\tau_n(A)=J_n-qB$. Applying Lemma 3.1 to every term of (4.1), and bounding
the odd terms in absolute value rather than assuming a sign, gives
\[
\begin{aligned}
\operatorname{per}\tau_n(A)
&\le d_n+\frac{r^2}{n-1}
\sum_{k=2}^n\frac{(n-k)!}{n^{n-k}}q^k\sigma_k(Q)\\
&=d_n+\frac{r^2}{n-1}
\bigl(\operatorname{per}(J_n+qQ)-d_n\bigr).
\end{aligned}
\tag{4.2}
\]
Here $\sigma_1(Q)=0$ because $Q$ has zero margins, and
$\sigma_k(Q)\ge0$ by (3.5). No positivity of an odd-degree coefficient
$\sigma_k(B)$ is being assumed.

To evaluate the last permanent, write
$J_n+tQ=tI_n+(1-t)J_n$. Choosing the $k$ rows in which the identity term is
used gives
\[
\operatorname{per}(J_n+tQ)
=\sum_{k=0}^n \binom{n}{k}t^k(n-k)!
\left(\frac{1-t}{n}\right)^{n-k}.
\tag{4.3}
\]
Dividing (4.3) by $d_n$ gives exactly $T_n(t)$ from (1.5). Therefore (4.2)
is the second inequality in (1.6):
\[
\frac{\operatorname{per}\tau_n(A)}{d_n}\le1+\beta_nr^2.
\tag{4.4}
\]
\section{Separation in orders at least four}

Assume first that $n\ge5$, and retain $q=1/(n-1)$. Formula (1.5) can be
written as the truncated-exponential identity
\[
T_n(q)=(1-q)^n
\sum_{k=0}^n\frac1{k!}
\left(\frac{nq}{1-q}\right)^k.
\tag{5.1}
\]
All omitted terms of the exponential series are positive. Hence
\[
T_n(q)<
\exp\!\left(n\left[\log(1-q)+\frac{q}{1-q}\right]\right).
\tag{5.2}
\]
The expression in brackets satisfies
\[
\log(1-q)+\frac{q}{1-q}
=\int_0^q\frac{t}{(1-t)^2}\,dt
\le\frac{q^2}{2(1-q)^2}.
\tag{5.3}
\]
Set $x=n/[2(n-2)^2]$. Then $0<x\le5/18<1$, and (5.2)--(5.3), together
with $e^x<1/(1-x)$, imply
\[
T_n(q)<e^x<\frac1{1-x},
\qquad
\beta_n<
\frac{n}{(n-1)(2(n-2)^2-n)}.
\tag{5.4}
\]
The rational expression on the right of (5.4) is smaller than $1/(2n)$.
After multiplication by the positive denominators, this is equivalent to
\[
\begin{aligned}
2n^3-13n^2+17n-8
&=2(n-5)^3+17(n-5)^2\\
&\quad+37(n-5)+2>0.
\end{aligned}
\tag{5.5}
\]
This proves $\beta_n<1/(2n)$ and the positivity of $\epsilon_n$ in (1.8)
for every $n\ge5$. If $A\ne J_n$, then $r>0$, so (2.12), (4.4), and (5.4)
give the strict chain
\[
\frac{\operatorname{per}A}{d_n}
\ge e^{r^2/(2n)}
>1+\frac{r^2}{2n}
>1+\beta_nr^2
\ge\frac{\operatorname{per}\tau_n(A)}{d_n}.
\tag{5.6}
\]
At $A=J_n$, the two permanents both equal $d_n$.

For $n=4$, direct evaluation of (1.5) gives
\[
T_4(t)=1+2t^2+\frac83t^3+5t^4,
\qquad
T_4(1/3)=\frac{112}{81}.
\]
Consequently
\[
\beta_4=\frac{31}{243},
\qquad
a_4=\frac7{48},
\qquad
a_4-\beta_4=\frac{71}{3888}>0,
\]
which proves (1.7) and makes the comparison in (1.3) strict away from $J_4$.

Finally, $e^{a_nr^2}\ge1+a_nr^2$. Subtracting the two bounds in (1.6)
therefore gives
\[
G_n(A)\ge d_n(a_n-\beta_n)r^2.
\tag{5.7}
\]
For $n=4$, (5.7) is (1.9). For $n\ge5$, replacing $a_n$ by the smaller
$1/(2n)$ and using (5.4) gives (1.9) with the explicit $\epsilon_n$ from
(1.8). This establishes (1.3) and its equality case for every $n\ge4$.

\section{Orders three and two}

\subsection{Order three}

Let $A\in\Omega_3$, put $B=A-J_3$, and retain
$r^2=\|B\|_F^2$. Direct expansion of a $3\times3$ zero-margin matrix gives
\[
\sigma_2(B)=\frac12r^2,
\qquad
\operatorname{per}B=\frac23\sum_{i,j=1}^3b_{ij}^3.
\tag{6.1}
\]
Apply (4.1) at $t=1$ and $t=-1/2$. Since
$\tau_3(A)=J_3-B/2$, one obtains
\[
G_3(A)=\frac18\left(r^2+6\sum_{i,j=1}^3b_{ij}^3\right).
\tag{6.2}
\]
Consider one row $(x,y,z)$ of $A$. It consists of nonnegative numbers with
$x+y+z=1$. Its contribution to the expression in parentheses in (6.2) is
\[
\begin{aligned}
&\sum_{u\in\{x,y,z\}}(u-1/3)^2
+6\sum_{u\in\{x,y,z\}}(u-1/3)^3\\
&\qquad=2\bigl(1-4(xy+xz+yz)+9xyz\bigr).
\end{aligned}
\tag{6.3}
\]
Reorder the three entries so that $x\ge y\ge z$. The following elementary
form of Schur's identity, using $x+y+z=1$, is
\[
1-4(xy+xz+yz)+9xyz
=(x-y)^2(x+y-z)+z(x-z)(y-z).
\tag{6.4}
\]
Both terms on the right are nonnegative: in particular,
$x+y-z=1-2z\ge1/3$. Thus every row contribution in (6.3) is nonnegative,
which proves $G_3(A)\ge0$.

Equation (6.4) also determines equality. It first forces $x=y$ and then
forces either $z=0$ or $z=x$. Thus each equality row is either
$(1/3,1/3,1/3)$ or a permutation of $(1/2,1/2,0)$. If exactly one or two
rows are half-rows, the half-rows would have to contribute respectively
$1/3$ or $2/3$ to every column after the uniform rows are removed. Their
contribution to each column is a multiple of $1/2$, so this is impossible.
All rows are therefore uniform, giving $J_3$, or all rows are half-rows. In
the latter case every column contains exactly one zero, the zero positions
form a permutation matrix $P$, and
\[
A=\frac{\mathbf1\mathbf1^{\mathsf T}-P}{2}.
\]
Conversely, $J_3$ and each of these six matrices make every row contribution
in (6.3) zero. This proves the order-three statement and its full equality
classification.

\subsection{Order two}

Every matrix in $\Omega_2$ has the form
\[
A=\begin{pmatrix}t&1-t\\1-t&t\end{pmatrix},
\qquad 0\le t\le1.
\]
The transform replaces $t$ by $1-t$, while
\[
\operatorname{per}A=t^2+(1-t)^2.
\]
Hence every $A\in\Omega_2$ is an equality case.

\subsection{Completion of the proof}

\begin{proof}[Proof of Theorem~1.1]
Section 5 proves (1.3) for $n\ge4$, with equality
only at $J_n$. Section 6.1 proves (1.3) in order three, with exactly the seven
equality cases listed in the theorem. Section 6.2 proves that every matrix
in $\Omega_2$ is an equality case.
\end{proof}

\section*{Acknowledgments}

The proof of this conjecture was carried out by GPT-5.6-sol, GPT-6 Astra,
and Claude Fable 5, under the guidance of the author. The author has reviewed the
resulting proof arguments. Responsibility for the final text rests with the
author.

\end{document}